\documentclass{article}
\usepackage[utf8]{inputenc}
\pdfoutput=1
\usepackage[sc,osf]{mathpazo}
\usepackage[height=8.5in,width=6.2in,letterpaper]{geometry}
\usepackage[sort&compress,numbers]{natbib}
\usepackage[colorlinks=true,citecolor=blue,breaklinks]{hyperref}
\usepackage[hyphenbreaks]{breakurl} 
\usepackage[tracking]{microtype}

\linespread{1.025}              

\makeatletter
\newlength\aftertitskip     \newlength\beforetitskip
\newlength\interauthorskip  \newlength\aftermaketitskip

\setlength\aftertitskip{0.1in plus 0.2in minus 0.2in}
\setlength\beforetitskip{0.05in plus 0.08in minus 0.08in}
\setlength\interauthorskip{0.08in plus 0.1in minus 0.1in}
\setlength\aftermaketitskip{0.3in plus 0.1in minus 0.1in}

\def\maketitle{\par
 \begingroup
   \def\thefootnote{\fnsymbol{footnote}}
   \def\@makefnmark{\hbox to 4pt{$^{\@thefnmark}$\hss}}
   \@maketitle \@thanks
 \endgroup
\setcounter{footnote}{0}
 \let\maketitle\relax \let\@maketitle\relax
 \gdef\@thanks{}\gdef\@author{}\gdef\@title{}\let\thanks\relax}

\def\@startauthor{\noindent \normalsize\bf}
\def\@endauthor{}
\def\@starteditor{\noindent \small {\bf Editor:~}}
\def\@endeditor{\normalsize}
\def\@maketitle{\vbox{\hsize\textwidth
 \linewidth\hsize \vskip \beforetitskip
 {\begin{center} \LARGE\@title \par \end{center}} \vskip \aftertitskip
 {\def\and{\unskip\enspace{\rm and}\enspace}%
  \def\addr{\small}%
  \def\email{\hfill\small\sf}%
  \def\name{\normalsize\bf}%
  \def\AND{\@endauthor\rm\hss \vskip \interauthorskip \@startauthor}
  \@startauthor \@author \@endauthor}
}}

\makeatother

\pdfoutput=1                    

\usepackage{floatrow} 
\newfloatcommand{capbtabbox}{table}[][\FBwidth]

\usepackage{graphicx}
\usepackage{amsmath,amsthm,amssymb,bm} 
\usepackage{amsfonts}
\usepackage{mathrsfs}
\usepackage{subfigure}
\usepackage{xspace}
\usepackage{array}
\usepackage{enumerate}
\usepackage{algorithm}
\usepackage{algorithmic}
\usepackage{appendix}
\usepackage{wrapfig}
\numberwithin{equation}{section}

\newtheorem{theorem}{Theorem}

\newtheorem{corr}[theorem]{Corollary}

\theoremstyle{definition}

\newtheorem{example}{Example}

\newcommand{\R}{\mathbb{R}}
\newcommand{\pp}{\mathbb{P}}

\newcommand{\pfrac}[2]{\left(\tfrac{#1}{#2}\right)}
\newcommand{\half}{\tfrac{1}{2}}
\newcommand{\gm}{{\#}}
\newcommand{\Gc}{\mathcal{G}}

\DeclareMathOperator*{\argmin}{argmin}
\DeclareMathOperator{\mydet}{det}
\DeclareMathOperator{\rank}{rank}
\newcommand{\algo}{\textsc{Yamsr}\xspace}
\newcommand{\pn}{\textsc{Pn}\xspace}
\newcommand{\norm}[1]{\Vert#1\Vert}

\begin{document}
\title{On the matrix square root via geometric optimization}
\author{\name Suvrit Sra \email suvrit@mit.edu\\ 
\addr Laboratory for Information and Decision Systems\\ Massachusetts Institute of Technology\\ Cambridge, MA, 02139, United States}

\maketitle

\vskip0.4cm
\hrule
\vskip0.4cm

\begin{abstract}
  This paper is triggered by the preprint ``\emph{Computing Matrix Squareroot via Non Convex Local Search}'' by Jain et al.~(\textit{\textcolor{blue}{arXiv:1507.05854}}), which analyzes gradient-descent for computing the square root of a positive definite matrix. Contrary to claims of~\citet{jain2015}, our experiments reveal that Newton-like methods compute matrix square roots rapidly and reliably, even for highly ill-conditioned matrices and without requiring commutativity. We observe that gradient-descent converges very slowly primarily due to tiny step-sizes and ill-conditioning. We derive an alternative first-order method based on geodesic convexity: our method admits a transparent convergence analysis ($< 1$ page), attains linear rate, and displays reliable convergence even for rank deficient problems. Though superior to gradient-descent, ultimately our method is also outperformed by a well-known scaled Newton method. Nevertheless, the primary value of our work is its conceptual value: it shows that for deriving gradient based methods for the matrix square root, \emph{the manifold geometric view of positive definite matrices can be much more advantageous than the Euclidean view}. 
\end{abstract}

\section{Introduction}
Matrix functions such as $A^\alpha$, $\log A$, or $e^A$ ($A \in \mathbb{C}^{n\times n}$) arise in diverse contexts. \citet{higham08} provides an engaging overview on the topic of matrix functions. A building block for efficient numerical evaluation of various matrix functions is the \emph{matrix square root} $A^{1/2}$, a function whose computation has witnessed great interest in the literature~\citep{hale08,ianna11,yuji,high86,ando81,anMoTr83}; see also~\citep[Ch.~6]{higham08}.

The present paper revisits the problem of computing the matrix square root for a symmetric positive semidefinite (psd) matrix.\footnote{The same ideas extend trivially to the Hermitian psd case.} Our work is triggered by the recent note of~\citep{jain2015}, who analyze a simple gradient-descent procedure for computing $A^{1/2}$ for a given psd matrix $A$. \citet{jain2015} motivate their work by citing weaknesses of a direct application of Newton's method (Algorithm~X in~\citep{high86}), which is known to suffer from ill-conditioning and whose convergence analysis depends on commutativity of its iterates, and thus breaks down as round-off error invalidates commutativity.

However, it turns out that the modified ``polar-Newton'' (\pn) method of~\citep[Alg.~6.21]{higham08} avoids these drawbacks, and offers a theoretically sound method with excellent practical behavior. At the same time, gradient-descent of~\citep{jain2015}, while initially appealing due to its simplicity, turns out to be empirically weak: it converges extremely slowly, especially on ill-conditioned matrices because its stepsizes become too small to ensure progress on computers with limited floating point computation.

At the expense of introducing ``yet another matrix square root'' (\algo) method, we present a non-Euclidean first-order method grounded in the geometric optimization framework of~\citep{sraHo15}. Our method admits a transparent convergence analysis, attains linear (geometric) convergence rate, and displays reliable behavior even for highly ill-conditioned problems (including rank-deficient problems). Although numerically superior to the gradient-descent approach of~\citep{jain2015}, like other linearly convergent strategies \algo is still outperformed by the well-known polar-Newton (scaled Newton) method.

In light of these observations, there are two key messages delivered by our paper:
\begin{list}{$\blacktriangleright$}{\leftmargin=1em}
\setlength{\itemsep}{0pt}
\item None of the currently known (to us) first-order methods are competitive with the established second-order polar-Newton algorithm for computing matrix square roots.\footnote{Presumably, the same conclusion also holds for many other matrix functions such as powers and logarithms.}
\item The actual value of this paper is conceptual: it shows that for deriving gradient based methods that compute matrix square roots, \emph{the differential geometric view of positive definite matrices is much more advantageous than the Euclidean view}. 
\end{list}
As a further attestation to the latter claim, we note that our analysis yields as a byproduct a convergence proof of a (Euclidean) iteration due to \citet{ando81}, which previously required a more intricate analysis. Moreover, we also obtain a geometric rate of convergence. 

Finally, we note that the above conceptual view underlies several other applications too, where the geometric view of positive definite matrices has also proved quite useful, e.g.,~\citep{cherianSra14,sraHo15,ssdiv,hoSra15b,wie12,wie12b,zhang12}.

\vspace*{-5pt}
\subsection{Summary of existing methods}
To place our statements in wider perspective we cite below several methods for matrix square roots. We refer the reader to~\citep[Chapters 6, 8]{higham08} for more details and many more references, as well as a broader historical perspective. For a condensed summary, we refer the reader to the survey of~\citet{ianna11} which covers the slightly more general problem of computing the matrix geometric mean.

\begin{enumerate}
\setlength{\itemsep}{-2pt}
\item \textbf{Eigenvectors}: Compute the decomposition $A=U\Lambda U^*$ and obtain $A^{1/2}=U\Lambda^{1/2}U^*$. While tempting, this can be slower for large and ill-conditioned matrices. In our case, since $A\succ 0$, this method coincides with the Schur-Cholesky method mentioned in~\citep{ianna11}.
\item \textbf{Matrix averaging}: Many methods exist that obtain the matrix geometric mean as a limit of simpler averaging procedures. These include scaled averaging iterations (which is essentially a matrix Harmonic-Arithmetic mean iteration that converges to the geometric mean) and their variants~\citep[\S5.1]{ianna11}. In spirit, the algorithm that we propose in this paper also falls in this category.
\item \textbf{Newton-like}: The classic Newton iteration of~\citep{high86}, whose weaknesses motivate the gradient-descent procedure of~\citep{jain2015}. However, a better way to implement this iteration is to use polar decomposition. Here, first we compute the Cholesky factorization $A=R^TR$, then obtain the square root as $A^{1/2}=UR$, where $U$ is the unitary polar factor of $R^{-1}$. This polar factor could be computed using the SVD of $R$, but the polar-Newton (\pn) iteration~\citep[Alg.~6.21]{higham08} avoids that and instead uses the \emph{scaled Newton method}
  \begin{equation*}
    U_{k+1} = \half\left(\mu_kU_k + \mu_k^{-1}U_k^{-T}\right),\qquad U_0 = R.
  \end{equation*}
  This iteration has the benefit that the number of steps needed to attain a desired tolerance can be predicted in advance, and it can be implemented to be backward stable. It turns out that empirically this iteration is works reliably even for extremely ill-conditioned matrices, and converges (locally) superlinearly. The scaled Halley-iteration of~\citet{yuji} converges even more rapidly and can be implemented in a numerically stable manner~\citep{naHi12}.
\item \textbf{Quadrature}: Methods based on Gaussian and Gauss-Chebyshev quadrature and rational minimax approximations to $z^{-1/2}$ are surveyed in~\citep[\S5.4]{ianna11}. Among others, these methods easily tackle computation of general powers, logarithms, and some other matrix functions~\citep{ianMe,hale08}. 
\item \textbf{First-order methods}: The binomial iteration which uses $(I-C)^{1/2}=I-\sum_j \alpha_j C^j$ for suitable $\alpha_j > 0$ and applies when $\rho(C)<1$~\citep[\S6.8.1]{higham08}. The gradient-descent method of~\citep{jain2015} which minimizes $\|X^2-A\|_F^2$. Like the binomial iteration, this method does not depend on linear system solves (or matrix inversions) and uses only matrix multiplication. The method introduced in section~\ref{sec:algo} is also a first-order method, though cast in a non-Euclidean space; it does, however, require (Cholesky based) linear system solves. 
\end{enumerate}
We mention other related work in passing: fast computation of a matrix $C$ such that $A^p \approx CC^T$ for SDD matrices~\citep{chengCheng}; Krylov-subspace methods and randomized algorithms for computing the product $f(A)b$~\citep[Chapter~13]{higham08}; and the broader topic of multivariate matrix means~\citep{andoLiMa04,mmtoolbox,ssdiv,lawlim08,nieBha13,jeuVaVa}.


\vspace*{-8pt}
\section{Geometric Optimization}
\label{sec:sdiv}
\vspace*{-8pt}
We present details of computing the matrix square root using geometric optimization. Specifically, we cast the problem of computing matrix square roots into the nonconvex optimization problem
\begin{equation}
  \label{eq:3}
  \min_{X \succ 0}\quad \delta_S^2(X, A) + \delta_S^2(X, I),
\end{equation}
whose unique solution is the desired matrix square root $X^* = A^{1/2}$; here $\delta_S^2$ denotes the \emph{S-Divergence}~\citep{ssdiv}:
\begin{equation}
  \label{eq:1}
  \delta_S^2(X, Y) := \log\det\pfrac{X+Y}{2} - \half\log\det(XY).
\end{equation}
To present our algorithm for solving~\eqref{eq:3} let us first recall some background.

The crucial property that helps us solve~\eqref{eq:3} is \emph{geodesic convexity} of $\delta_S^2$, that is, convexity along geodesics in $\pp^n$. Consider thus, the geodesic (see \citep[Ch.~6]{bhatia07} for a proof that this is \emph{the} geodesic):
\begin{equation}
  \label{eq:4}
  X \gm_t Y := X^{1/2}(X^{-1/2}YX^{-1/2})^t X^{1/2},\quad X, Y \succ 0,\ \ t \in [0,1],
\end{equation}
that joins $X$ to $Y$. A function $f: \mathbb{P}^n \to \R$ is called \emph{geodesically convex} (g-convex) if it satisfies
\begin{equation}
  \label{eq:5}
  f(X \gm_t Y) \le (1-t) f(X) + t f(Y).
\end{equation}
\begin{theorem}
  \label{thm:gc}
  The S-Divergence~\eqref{eq:1} is jointly geodesically convex on (Hermitian) psd matrices.
\end{theorem}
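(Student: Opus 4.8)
The plan is to split $\delta_S^2$ into pieces, dispose of the easy ones using the fact that $\log\det$ is geodesically \emph{affine}, and reduce the whole claim to joint geodesic convexity of the single map $(X,Y)\mapsto\log\det(X+Y)$, which in turn will follow from superadditivity of the weighted geometric mean. First I would write
\[
  \delta_S^2(X,Y) = \log\det\!\pfrac{X+Y}{2} - \tfrac12\log\det X - \tfrac12\log\det Y .
\]
Along the geodesic $X\gm_t Y = X^{1/2}(X^{-1/2}YX^{-1/2})^t X^{1/2}$ a direct determinant computation gives $\det(X\gm_t Y) = (\det X)^{1-t}(\det Y)^t$, hence $\log\det(X\gm_t Y) = (1-t)\log\det X + t\log\det Y$; that is, $\log\det$ is g-affine on $\pp^n$. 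Consequently, for any pair of geodesics $t\mapsto X\gm_t X'$ and $t\mapsto Y\gm_t Y'$, the two terms $-\tfrac12\log\det(\cdot)$ evaluated at $(X\gm_t X',\,Y\gm_t Y')$ reproduce exactly $(1-t)$ times plus $t$ times their values at the endpoints. Since also $\log\det(M/2) = \log\det M - n\log 2$ and the additive constant is g-affine, it remains only to prove that $g(X,Y):=\log\det(X+Y)$ is jointly g-convex, i.e. $g(X\gm_t X',\,Y\gm_t Y')\le (1-t)\,g(X,Y) + t\,g(X',Y')$.

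The crucial ingredient is \emph{superadditivity} of the weighted geometric mean:
\[
  (A+C)\gm_t(B+D) \ \succeq\ (A\gm_t B) + (C\gm_t D),\qquad A,B,C,D\succ 0,\ t\in[0,1].
\]
Granting this, apply it with $A=X, B=X', C=Y, D=Y'$ to get $(X+Y)\gm_t(X'+Y')\succeq (X\gm_t X') + (Y\gm_t Y')$; then use operator monotonicity of $\log\det$ (if $P\succeq Q\succ 0$ then $\det P\ge \det Q$) followed once more by g-affineness of $\log\det$:
\[
  \log\det\!\bigl((X\gm_t X')+(Y\gm_t Y')\bigr)\ \le\ \log\det\!\bigl((X+Y)\gm_t(X'+Y')\bigr)\ =\ (1-t)\log\det(X+Y)+t\log\det(X'+Y'),
\]
which is precisely the sought g-convexity of $g$. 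Adding back the two g-affine terms completes the argument, and the Hermitian psd case is handled verbatim with $X^*$ in place of transposes.

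The main obstacle is establishing the superadditivity of $\gm_t$. I would obtain it from two facts: positive homogeneity, $(sA)\gm_t(sB) = s(A\gm_t B)$ for $s>0$, and joint concavity of $(A,B)\mapsto A\gm_t B$ in the L\"owner order. Indeed, concavity taken at weight $\tfrac12$ yields $\tfrac12(A\gm_t B)+\tfrac12(C\gm_t D)\preceq \bigl(\tfrac{A+C}{2}\bigr)\gm_t\bigl(\tfrac{B+D}{2}\bigr) = \tfrac12\bigl((A+C)\gm_t(B+D)\bigr)$, which rearranges to the displayed inequality. Joint concavity of the weighted geometric mean is the one genuinely nontrivial input; it is classical (Ando's work on operator means and the Kubo--Ando theory, ultimately resting on operator concavity of $z\mapsto z^t$ on $(0,\infty)$ for $t\in[0,1]$), so in the write-up I would invoke it by citation (e.g.\ \citep{bhatia07,ando81}) rather than reprove it. Every other step above is elementary.
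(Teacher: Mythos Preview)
Your argument is correct and follows essentially the same route as the paper's own proof: both hinge on the superadditivity of the geometric mean (obtained from Kubo--Ando joint concavity plus homogeneity), monotonicity of $\log\det$, and the g-affineness identity $\log\det(X\gm_t Y)=(1-t)\log\det X+t\log\det Y$. The only cosmetic differences are that the paper establishes midpoint convexity and then appeals to continuity, whereas you handle general $t$ directly, and you spell out the concavity\,$\Rightarrow$\,superadditivity step that the paper takes for granted; for the citation you should point to Kubo--Ando~\citep{kuboAndo80} (or \citep{bhatia07}) rather than~\citep{ando81}, which is a different Ando paper.
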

\begin{proof}
  See~\citep[Theorem 4.4]{ssdiv}; we include a proof in the appendix for the reader's convenience.
\end{proof}
Similar to Euclidean convexity, g-convexity bestows the crucial property: ``local $\implies$ global.'' Consequently, we may use any algorithm that ensures local optimality; g-convexity will imply global optimality. We present one such algorithm in section~\ref{sec:algo} below.

But first let us see why solving~\eqref{eq:3} yields the desired matrix square root.
\begin{theorem}
  \label{thm.gm}
  Let $A, B \succ 0$. Then, 
  \begin{equation}
    \label{eq:6}
    A \gm_{1/2} B = \argmin\nolimits_{X \succ 0}\quad \delta_S^2(X, A) + \delta_S^2(X, B).
  \end{equation}
  Moreover, $A\gm_{1/2} B$ is equidistant from $A$ and $B$, i.e., $\delta_S^2(A, A\gm_{1/2} B) = \delta_S^2(B, A\gm_{1/2} B)$.
\end{theorem}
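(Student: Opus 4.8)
The plan is to use geodesic convexity to collapse the minimization in \eqref{eq:6} to a single stationarity equation, and then to recognize that equation as the Riccati equation whose unique positive definite root is the geometric mean.

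First I would set $h(X) := \delta_S^2(X,A) + \delta_S^2(X,B)$ on $\pp^n$. Since $\delta_S^2$ is jointly g-convex (Theorem~\ref{thm:gc}) and $A \gm_t A = A$, each summand is g-convex in $X$ along the geodesic~\eqref{eq:4}, hence so is $h$. As with ordinary convexity this gives ``critical point $\implies$ global minimizer,'' and because $\pp^n$ is itself geodesically convex a \emph{unique} critical point must be the \emph{unique} minimizer: if $Y \ne X^\star$ were a second minimizer, then by~\eqref{eq:5} the value of $h$ would be constant along $X^\star \gm_t Y$, so every interior point of that geodesic would again be a critical minimizer, contradicting uniqueness. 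Thus it suffices to find all critical points of $h$ in $\pp^n$.

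Next I would compute the Euclidean gradient on the open set $\pp^n$. Writing $\delta_S^2(X,Y) = \log\det\tfrac{X+Y}{2} - \half\log\det X - \half\log\det Y$ gives $\nabla_X \delta_S^2(X,Y) = (X+Y)^{-1} - \half X^{-1}$, so $\nabla h(X) = 0$ reads $(X+A)^{-1} + (X+B)^{-1} = X^{-1}$. Left-multiplying by $X$ and using $I - X(X+B)^{-1} = B(X+B)^{-1}$ turns this into $X(X+A)^{-1} = B(X+B)^{-1}$; inverting both sides yields $I + AX^{-1} = I + XB^{-1}$, i.e. $AX^{-1} = XB^{-1}$, i.e. $XB^{-1}X = A$. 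Every step reverses, so the critical points of $h$ are exactly the positive definite solutions of the Riccati equation $XB^{-1}X = A$. I would then invoke the classical fact (e.g.~\citep[Ch.~6]{bhatia07}) that this equation has the unique positive definite solution $X^\star = A^{1/2}(A^{-1/2}BA^{-1/2})^{1/2}A^{1/2} = A\gm_{1/2}B$ — a direct substitution using $A^{1/2}B^{-1}A^{1/2} = (A^{-1/2}BA^{-1/2})^{-1}$ checks it is a solution — whence $h$ has a single critical point and therefore a unique minimizer, namely $A\gm_{1/2}B$; this is~\eqref{eq:6}. For the equidistance claim I would put $M := A\gm_{1/2}B$ and read off from $MB^{-1}M = A$ that $A + M = MB^{-1}(B+M)$ and $\det A = (\det M)^2/\det B$, so that $\frac{\det(A+M)}{\det(B+M)} = \frac{\det M}{\det B}$ and $\half\log\frac{\det A}{\det B} = \log\frac{\det M}{\det B}$; substituting into~\eqref{eq:1} gives $\delta_S^2(A,M) - \delta_S^2(B,M) = \log\frac{\det(A+M)}{\det(B+M)} - \half\log\frac{\det A}{\det B} = 0$.

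The main obstacle is the noncommutative algebra in the gradient step: one has to keep track of which side each factor multiplies on so that the stationarity equation collapses cleanly to $XB^{-1}X = A$ rather than to some apparently weaker relation. Everything else — the gradient formula for $\log\det$, the determinant bookkeeping in the equidistance computation, and the appeal to the known uniqueness of the geometric-mean Riccati solution — is routine.
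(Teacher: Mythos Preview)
Your argument is correct and follows essentially the same route as the paper: set the Euclidean gradient of the objective to zero on the open cone $\pp^n$, reduce the stationarity condition to the Riccati equation (you obtain $XB^{-1}X=A$, the paper the equivalent $B=XA^{-1}X$, via a slightly different algebraic rearrangement), invoke the standard uniqueness of its positive definite solution $A\gm_{1/2}B$, and appeal to g-convexity (Theorem~\ref{thm:gc}) for global optimality. Your uniqueness discussion is a bit more elaborate than needed---since $\pp^n$ is open, any minimizer is automatically a critical point, so uniqueness of the Riccati solution already forces uniqueness of the minimizer without the auxiliary geodesic argument---but it is not wrong; and you go beyond the paper by actually supplying the determinant computation for the equidistance claim, which the paper only asserts.
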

\begin{proof}
  See~\citep[Theorem 4.1]{ssdiv}. We include a proof below for convenience.

  Since the constraint set is open, we can simply differentiate the objective in~\eqref{eq:6}, and set the derivative to zero to obtain the necessary condition
  \begin{align*}
    \nonumber
    &\left(\tfrac{X+A}{2}\right)^{-1}\tfrac{1}{2} + \left(\tfrac{X+B}{2}\right)^{-1}\tfrac{1}{2} - X^{-1} = 0,\\
    \label{eq.33}
    &\implies\qquad X^{-1} = (X+A)^{-1}+(X+B)^{-1}\\
    \nonumber
    &\implies\qquad (X+A)X^{-1}(X+B) = 2X + A + B\\
    \nonumber
    &\implies\qquad B = XA^{-1}X.
  \end{align*}
  The latter is a Riccati equation whose \emph{unique} positive solution is $X=A\gm_{1/2}B$---see~\citep[Prop~1.2.13]{bhatia07}\footnote{There is a typo in the cited result, in that it uses $A$ and $A^{1/2}$ where it should use $A^{-1}$ and $A^{-1/2}$.}. Global optimality of this solution follows easily since $\delta_S^2$ is g-convex as per Thm.~\ref{thm:gc}.
\end{proof}

\begin{corr}
  The unique solution to~\eqref{eq:3} is $X^* = A\gm_{1/2} I = A^{1/2}$.
\end{corr}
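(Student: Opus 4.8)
The plan is to derive the corollary directly from Theorem~\ref{thm.gm} by specializing $B = I$. Applying that theorem with this choice immediately gives that $X^* = A \gm_{1/2} I$ is the unique minimizer of $\delta_S^2(X,A) + \delta_S^2(X,I)$, which is exactly the objective in~\eqref{eq:3}. So the only remaining content is to verify the identity $A \gm_{1/2} I = A^{1/2}$.

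First I would unwind the definition~\eqref{eq:4} of the geodesic with $X = A$, $Y = I$, and $t = \tfrac12$:
\begin{equation*}
  A \gm_{1/2} I = A^{1/2}\bigl(A^{-1/2} I A^{-1/2}\bigr)^{1/2} A^{1/2} = A^{1/2}(A^{-1})^{1/2} A^{1/2}.
\end{equation*}
Then I would use $(A^{-1})^{1/2} = A^{-1/2}$ (both are the unique positive definite square root of $A^{-1}$, and this is a standard fact for $A \succ 0$), so the middle factor collapses and we get $A^{1/2} A^{-1/2} A^{1/2} = A^{1/2}$. Alternatively, one can observe that $A \gm_{1/2} I$ solves the Riccati equation $I = X A^{-1} X$ from the proof of Theorem~\ref{thm.gm} (with $B = I$), i.e.\ $X^2 = A$ after conjugating, whose unique positive definite solution is $A^{1/2}$; this route reuses machinery already on the page.

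Finally I would note that uniqueness is inherited directly from Theorem~\ref{thm.gm} (equivalently from g-convexity of $\delta_S^2$ via Theorem~\ref{thm:gc}, which guarantees the stationary point is the global minimizer), so nothing new needs to be proved there. Honestly, there is no real obstacle here: the corollary is a one-line specialization, and the only thing to be slightly careful about is writing out the geodesic evaluation cleanly and invoking $(A^{-1})^{1/2} = A^{-1/2}$ rather than leaving it implicit. I would keep the proof to two or three lines.
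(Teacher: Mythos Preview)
Your proposal is correct and matches the paper's approach: the corollary is stated without proof as an immediate specialization of Theorem~\ref{thm.gm} with $B=I$, and you have simply spelled out the one-line verification $A\gm_{1/2}I = A^{1/2}(A^{-1})^{1/2}A^{1/2} = A^{1/2}$ that the paper leaves implicit.
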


\subsection{Algorithm for matrix square root}
\label{sec:algo}
We present now an iteration for solving~\eqref{eq:3}. As for Thm.~\ref{thm.gm}, we obtain here the optimality condition
\begin{equation}
  \label{eq:2}
  X^{-1} = (X+A)^{-1} + (X+I)^{-1}.
\end{equation}
Our algorithm solves~\eqref{eq:2} simply by running\footnote{Observe that the same algorithm also computes $A\gm_{1/2}B$ if we replace $I$ by $B$ in~\eqref{eq:7} and~\eqref{eq:8}.}
\begin{center}
  \fbox{
    \begin{minipage}[c]{0.9\linewidth}
      \vspace*{-4pt}
      \begin{align}
        \label{eq:7}
        X_0 &= \half(A+I)\\
        \label{eq:8}
        X_{k+1} &\gets [(X_k+A)^{-1} + (X_k+I)^{-1}]^{-1},\qquad k=0,1,\ldots.
      \end{align}
    \end{minipage}
  }
\end{center}
The following facts about our algorithm and its analysis are noteworthy:
\begin{enumerate}
  \setlength{\itemsep}{0pt}
\item Towards completion of this article we discovered that iteration~\eqref{eq:8} has been known in matrix analysis since over three decades! Indeed, motivated by electrical resistance networks, \citet{ando81} analyzed exactly the same iteration as~\eqref{eq:8} in 1981. Our convergence proof (Theorem~\ref{thm:convg}) is much shorter and more transparent---it not only reveals the geometry behind its convergence but also yields explicit bounds on the convergence rate, thus offering a new understanding of the classical work of~\citet{ando81}.
\item Initialization~\eqref{eq:7} is just one of many. Any matrix in the psd interval $[2(I + A^{-1})^{-1}, \half(A+I)]$ is equally valid; different choices can lead to better empirical convergence. 
\item Each iteration of the algorithm involves three matrix inverses. Theoretically, this costs  ``just'' $O(n^\omega)$ operations. In practice, we compute~\eqref{eq:8} using linear system solves; in \textsc{Matlab} notation:
  \begin{center}
    \verb$R = (X+A)\I + (X+I)\I; X = R\I;$
  \end{center}
\item The gradient-descent method of~\citep{jain2015} and the binomial method~\citep{higham08} do not require solving linear systems, and rely purely on matrix multiplication. But both turn out to be slower than~\eqref{eq:8} while also being more sensitive to ill-conditioning.
\item For ill-conditioned matrices, it is better to iterate~\eqref{eq:8} with $\alpha I$, for a suitable scalar $\alpha>0$; the final solution is recovered by dowscaling by $\alpha^{-1/2}$. A heuristic choice is $\alpha = \text{tr}(A)/\sqrt{n}$, which seems to work well in practice (for well-conditioned matrices $\alpha=1$ is preferable).
\end{enumerate}

\begin{theorem}[Convergence]
  \label{thm:convg}
  Let $\{X_k\}_{k\ge 0}$ be generated by (\ref{eq:7})-(\ref{eq:8}). Let $X^*=A^{1/2}$ be the optimal solution to~\eqref{eq:3}. Then, there exists a constant $\gamma < 1$ such that $\delta_S^2(X_k,X^*) \le \gamma^k\delta_S^2(X_0,X^*)$. Moreover, $\lim X_k=X^*$.
\end{theorem}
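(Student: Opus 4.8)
The plan is to treat~\eqref{eq:8} as a fixed-point iteration for the geodesically convex problem~\eqref{eq:3} and to prove a \emph{one-step contraction} $\delta_S^2(X_{k+1},X^*)\le\gamma\,\delta_S^2(X_k,X^*)$ for a single constant $\gamma<1$; the stated bound then follows by induction, and $\lim X_k=X^*$ because $\delta_S^2(\cdot,X^*)$ has compact sublevel sets and vanishes only at $X^*$. Write $T(X):=\bigl[(X+A)^{-1}+(X+I)^{-1}\bigr]^{-1}$, so $X_{k+1}=T(X_k)$. The fixed points of $T$ are exactly the positive solutions of~\eqref{eq:2}, and by Theorem~\ref{thm.gm} and the corollary following it the only one is $A\gm_{1/2}I=A^{1/2}=X^*$; so nothing must be shown about \emph{where} the iteration goes---only how fast. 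I would use three invariances of $\delta_S^2$ immediate from~\eqref{eq:1}: congruence invariance $\delta_S^2(C^*XC,C^*YC)=\delta_S^2(X,Y)$, scaling invariance $\delta_S^2(cX,cY)=\delta_S^2(X,Y)$, and inversion invariance $\delta_S^2(X^{-1},Y^{-1})=\delta_S^2(X,Y)$. Conjugating all matrices by $A^{-1/4}$ turns $(A,I,X^*)$ into $(A^{1/2},A^{-1/2},I)$, so I may as well assume $X^*=I$ with the two ``centres'' mutually inverse; I keep the original notation below.

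\emph{Monotone descent traps the iterates.} The update~\eqref{eq:8} is precisely the majorize--minimize step for $\phi(X):=\delta_S^2(X,A)+\delta_S^2(X,I)$ obtained by replacing each concave term $\log\det\tfrac{X+Y}{2}$, $Y\in\{A,I\}$, by its supporting hyperplane at $X_k$: the surrogate is convex in $X$ with gradient $(X_k+A)^{-1}+(X_k+I)^{-1}-X^{-1}$, and setting this to zero gives~\eqref{eq:8}. Hence $\phi(X_{k+1})\le\phi(X_k)$, strictly unless $X_k=X^*$, so every $X_k$ lies in the sublevel set $\mathcal S:=\{X\succ0:\phi(X)\le\phi(X_0)\}$. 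Since $\delta_S^2(\cdot,A)\to\infty$ both as $X$ approaches the boundary of the cone and as $\|X\|\to\infty$, $\mathcal S$ is a compact subset of the open cone, and it is forward invariant; I will only need the contraction to be uniform over $\mathcal S$.

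\emph{One-step contraction (the crux).} By inversion invariance and the fixed-point identity $X^{*-1}=(X^*+A)^{-1}+(X^*+I)^{-1}$,
\[
\delta_S^2(T(X),X^*)=\delta_S^2\bigl((X+A)^{-1}+(X+I)^{-1},\ (X^*+A)^{-1}+(X^*+I)^{-1}\bigr).
\]
Now $T$ is the parallel sum of the affine operator-monotone maps $X\mapsto X+A$, $X\mapsto X+I$, hence operator monotone and operator concave, and it extends to the closed cone with $T(0)=(A^{-1}+I)^{-1}\succ0$. From operator monotonicity and the \emph{strict} positivity of $A$ and $I$ one gets a L\"owner-order contraction: if $e^{-r}X^*\preceq X\preceq e^{r}X^*$ then, writing $(e^{r}X^*+A)^{-1}=e^{-r}(X^*+e^{-r}A)^{-1}\succeq\mu_A(r)(X^*+A)^{-1}$ with $\mu_A(r):=e^{-r}/\bigl(1-(1-e^{-r})c_A\bigr)\in(e^{-r},1)$ (and likewise $\mu_I(r)$ for the $I$-term), and taking the \emph{minimum} of the two factors---not their sum---gives $e^{-\theta(r)r}X^*\preceq T(X)\preceq e^{\theta(r)r}X^*$ with $\theta(r)<1$. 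Thus $d_T(T(X),X^*)\le\bar\theta\,d_T(X,X^*)$ in the Thompson metric, $\bar\theta:=\theta\bigl(d_T(X_0,X^*)\bigr)<1$, so $X_k\to X^*$ geometrically in $d_T$. To get the rate directly in $\delta_S^2$ (and avoid the dimensional loss in $\log\cosh(\tfrac12 d_T)\le\delta_S^2\le n\log\cosh(\tfrac12 d_T)$) I would combine this with the local estimate: the derivative of $T$ at $X^*$ is $E\mapsto BEB+CEC$ with $B:=X^{*1/2}(X^*+A)^{-1}X^{*1/2}$, $C:=X^{*1/2}(X^*+I)^{-1}X^{*1/2}$ and $B+C=I$ (again the fixed-point identity), while $\delta_S^2(X^*+E,X^*)=\tfrac18\|X^{*-1/2}EX^{*-1/2}\|_F^2+O(\|E\|^3)$, so the local rate equals the Frobenius operator norm squared of $E\mapsto BEB+CEC$, namely $\bigl(\max_{b,b'\in\operatorname{spec}(B)}(bb'+(1-b)(1-b'))\bigr)^2$, which is $<1$ since $b,b'\in(0,1)$. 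The map $X\mapsto\delta_S^2(T(X),X^*)/\delta_S^2(X,X^*)$ is therefore continuous on $\mathcal S$, equal to that local rate at $X^*$ and (by strict descent plus the $d_T$-contraction) strictly below $1$ elsewhere; by compactness of $\mathcal S$ its supremum is the desired $\gamma<1$.

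\emph{Conclusion and the hard part.} Induction then gives $\delta_S^2(X_k,X^*)\le\gamma^k\delta_S^2(X_0,X^*)$, and $X_k\to X^*$. I expect the one-step contraction to be the only real obstacle. The danger is that a triangle-type estimate would add two contraction factors that are each close to $1$ when $A$ is ill-conditioned (and whose sum exceeds $1$), so the argument must exploit the fixed-point identity at $X^*$---that the two resolvents of $X^*$ add to $X^{*-1}$, equivalently $B+C=I$---to make the two contributions combine multiplicatively rather than additively, and it must use forward invariance and compactness of $\mathcal S$ to turn a merely strict per-step decrease into a rate $\gamma$ uniform in $k$. I would also note in passing that this recovers, with an explicit geometric rate, the convergence of the classical iteration of~\citet{ando81}, the brevity coming entirely from using the invariances and g-convexity of $\delta_S^2$ to localize and linearize~\eqref{eq:2}.
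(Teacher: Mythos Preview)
Your proposal does contain the paper's core idea---Thompson-metric contraction of $T$---but wraps it in a great deal of extra machinery (the MM interpretation, sublevel-set compactness, local linearization at $X^*$, and a compactness argument on the ratio $R(X)=\delta_S^2(T(X),X^*)/\delta_S^2(X,X^*)$) that the paper simply does not need. The paper's entire proof is a five-line chain: with $\delta_T(X,Y)=\norm{\log(X^{-1/2}YX^{-1/2})}$ it uses only the three standard facts
\[
\delta_T(X^{-1},Y^{-1})=\delta_T(X,Y),\qquad
\delta_T(X+A,Y+A)\le\tfrac{\alpha}{\alpha+\lambda_{\min}(A)}\delta_T(X,Y),\qquad
\delta_T(U+V,U'+V')\le\max\{\delta_T(U,U'),\delta_T(V,V')\},
\]
to get $\delta_T(T(X),T(Y))\le\gamma\,\delta_T(X,Y)$ directly, then invokes Banach. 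It also derives the explicit rate $\gamma=\frac{1+\norm{A}}{1+\norm{A}+2\min(\lambda_{\min}(A),1)}$ from the forward-invariant order interval $[2(I+A^{-1})^{-1},\tfrac12(A+I)]$, whereas your $\gamma$ is obtained only by compactness and is nonconstructive. Your L\"owner-sandwich computation of the $d_T$-contraction is essentially the same translation inequality unpacked by hand (your constant $c_A$ is never defined, incidentally), so on the substantive point you and the paper agree.

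The place where your proposal goes beyond the paper---and where it has a real gap---is the transfer from $d_T$ to $\delta_S^2$. You assert that $R(X)<1$ for $X\neq X^*$ ``by strict descent plus the $d_T$-contraction,'' but neither of these gives it. MM descent yields $\phi(T(X))<\phi(X)$ for $\phi=\delta_S^2(\cdot,A)+\delta_S^2(\cdot,I)$, which is not the same as $\delta_S^2(T(X),X^*)<\delta_S^2(X,X^*)$; and $d_T$-contraction controls only the extreme generalized eigenvalue of $X$ relative to $X^*$, while $\delta_S^2$ aggregates all of them, so a strict $d_T$-decrease is compatible in principle with a $\delta_S^2$-increase. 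Moreover, $R$ does not extend continuously to $X^*$: the limit along $X=X^*+tE$ depends on the direction $E$ (your own derivative computation shows this), so the ``continuous on $\mathcal S$, hence attains its sup'' step needs at least an upper-semicontinuity argument you have not supplied. It is worth noting that the paper itself actually proves the linear rate in $\delta_T$, not in $\delta_S^2$; the $\delta_S^2$ inequality in the theorem statement is somewhat informal, and you need not have invested the extra effort.
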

\begin{proof}
  Our proof is a specialization of the fixed-point theory in~\citep{leeLim,sraHo15}. Specifically, we prove that~\eqref{eq:8} is a fixed-point iteration under the \emph{Thompson part metric}
  \begin{equation}
    \label{eq:10}
    \delta_T(X,Y) := \norm{\log(X^{-1/2}YX^{-1/2})},
  \end{equation}
  where $\norm{\cdot}$ is the usual operator norm. The metric~\eqref{eq:10} satisfies many remarkable properties; for our purpose, we need the following three well-known properties (see e.g.,~\citep[Prop.~4.2]{sraHo15}):
  \begin{equation}
    \label{eq:11}
    \begin{split}
      \delta_T(X^{-1},Y^{-1}) = \delta_T(X,Y),\quad &\delta_T(X+A,Y+B) \le \max\{\delta_T(X,Y),\delta_T(A,B)\},\\
      \delta_T(X+A, Y+A) &\le \tfrac{\alpha}{\alpha+\lambda_{\min}(A)}\delta_T(X,Y),
      \quad \alpha=\max\{\norm{X},\norm{Y}\}.
    \end{split}
  \end{equation}
  Consider now the nonlinear map
  \begin{equation*}
    \Gc \equiv X \mapsto [(X+A)^{-1} + (X+I)^{-1}]^{-1},
  \end{equation*}
  corresponding to iteration~\eqref{eq:8}. Using properties~\eqref{eq:11} of the Thompson metric $\delta_T$ we have
  \begin{align*}
    \delta_T(\Gc(X), \Gc(Y)) &= \delta_T([(X+A)^{-1} + (X+I)^{-1}]^{-1}, [(Y+A)^{-1} + (Y+I)^{-1}]^{-1})\\
    &= \delta_T((X+A)^{-1} + (X+I)^{-1},(Y+A)^{-1} + (Y+I)^{-1})\\
    &\le \max\bigl\{\delta_T((X+A)^{-1}, (Y+A)^{-1}), \delta_T((X+I)^{-1}, (Y+I)^{-1})\bigr\}\\
    &=\max\left\{\delta_T(X+A, Y+A), \delta_T(X+I, Y+I)\right\}\\
    &\le\max\left\{\gamma_1\delta_T(X,Y), \gamma_2\delta_T(X,Y)\right\},\quad\gamma_1=\tfrac{\alpha}{\alpha+\lambda_{\min}(A)}, \gamma_2=\tfrac{\alpha}{\alpha+1},\ \alpha=\max\{\norm{X},\norm{Y}\},\\
    &\le \gamma\delta_T(X,Y),\qquad \gamma < 1,
  \end{align*}
  where we can choose $\gamma$ to be independent of $X$ and $Y$. Thus, the map $\Gc$ is a strict contraction. Hence, from the Banach contraction theorem it follows that $\delta_T(X_k,X^*)$ converges at a linear rate given by $\gamma$, and that $X^k \to X^*$. Notice that $\Gc$ maps the (compact) interval $[2(I + A^{-1})^{-1}, \half(A+I)]$ to itself. Thus, $\alpha \le \half\norm{I+A}$; since $\frac\alpha{c+\alpha}$ is increasing, we can easily upper-bound $\gamma_1,\gamma_2$ to finally obtain
  \begin{equation*}
    \gamma = \frac{1 + \norm{A}}{1 + \norm{A} + 2\min(\lambda_{\min}(A), 1)}.
  \end{equation*}
  This is strictly smaller than $1$ if $A \succ 0$, thus yielding an explicit contraction rate.
\end{proof}

\noindent\emph{Remark 1:} The bound on $\gamma$ above is a worst-case bound. Empirically, the value of $\alpha$ in the bound is usually much smaller and the convergence rate commensurately faster.

\noindent\emph{Remark 2:} Starting with $X_0=\half(A+I)$ ensures that $X_0 \succ 0$; thus, we can use the iteration~\eqref{eq:8} even if $A$ is semidefinite, as all intermediate iterates remain well-defined. This suggests why iteration~\eqref{eq:8} is empirically robust against ill-conditioning. 

\begin{example}
  \label{eg.lr}
  Suppose $A=0$. Then, the map $\Gc$ is no longer contractive but still nonexpansive. Iterating~\eqref{eq:8} generates the sequence $\{X_k\} = \{\half I, \tfrac38I,\tfrac{33}{112}I,\ldots\}$, which converges to zero.
\end{example}

Example~\ref{eg.lr} shows that iteration~\eqref{eq:8} remains well-defined even for the zero matrix. This prompts us to take a closer look at computing square roots of low-rank semidefinite matrices. In particular, we extend the definition of the S-Divergence to low-rank matrices. Let $A$ be a rank-$r$ semidefinite matrix of size $n\times n$ where $r \le n$. Then, define $\det_r(A) := \prod_{i=1}^r \lambda_i(A)$ to be product of its $r$ positive eigenvalues. Using this, we can extend~(\ref{eq:1}) as
\begin{equation}
  \label{eq:14}
  \delta_{S,r}^2(X,Y) := \log\mydet_r\pfrac{X+Y}{2} - \half\log\mydet_r(X)-\half\log\mydet_r(Y),
\end{equation}
for rank-$r$ SPD matrices $X$ and $Y$. If $\rank(X) \not= \rank(Y)$, we set $\delta_{S,r}(X,Y)=+\infty$. The above definition can also be obtained as a limiting form of~\eqref{eq:1} by applying $\delta_S^2$ to rank deficient $X$ and $Y$ by considering $X+\epsilon I$ and $Y+\epsilon I$ and letting $\epsilon \to 0$.

Although iteration~\eqref{eq:8} works remarkably well in practice, it is a question of future interest on how to obtain square roots for rank deficient matrices faster by exploiting~\eqref{eq:14} instead.


\section{Numerical results}

\label{sec:expts}
We present numerical results comparing running times and accuracy attained by: (i) \algo; (ii) \textsc{Gd}; (iii) \textsc{LsGd}; and (iv) \pn. These methods respectively refer to iteration~\eqref{eq:8}, the fixed step-size gradient-descent procedure of~\citep{jain2015}, our line-search based implementation of gradient-descent, and the polar-Newton iteration of~\citep[Alg.~6.21]{higham08}.

We present only one experiment with \textsc{Gd}, because with fixed steps it vastly underperforms all other methods. In particular, if we set the step size according to the theoretical bounds of~\citep{jain2015}, then for matrices with large condition numbers the step size becomes smaller than machine precision! In our experiments with \textsc{Gd} we used step sizes much larger than theoretical ones, otherwise the method makes practically no progress at all. More realistically, if we wish to use gradient-descent we must employ line-search. Ultimately, although substantially superior to plain \textsc{Gd}, even \textsc{LsGd} turns out to be outperformed by \algo, which in turn is superseded by \pn.

We experiment with the following matrices:
\begin{enumerate}
\vspace*{-5pt}
\setlength{\itemsep}{0pt}
\item $I+ \beta UU'$ for a low-rank matrix $U$ and a variable constant $\beta$. These matrices are well-conditioned.
\item Random Correlation matrices (\textsc{Matlab}: \verb+gallery('randcorr', n)+); medium conditioned.
\item The Hilbert matrix (\textsc{Matlab}: \verb+hilb(n)+). This is a well-known ill-conditioned matrix class.
\item The inverse Hilbert matrix (\textsc{Matlab}: \verb+invhilb(n)+). The entries of the inverse Hilbert matrix are very large integers. Extremely ill-conditioned.
\end{enumerate}

\begin{figure}[h]
  \centering
  \begin{tabular}{ccc}
    \includegraphics[width=0.3\linewidth]{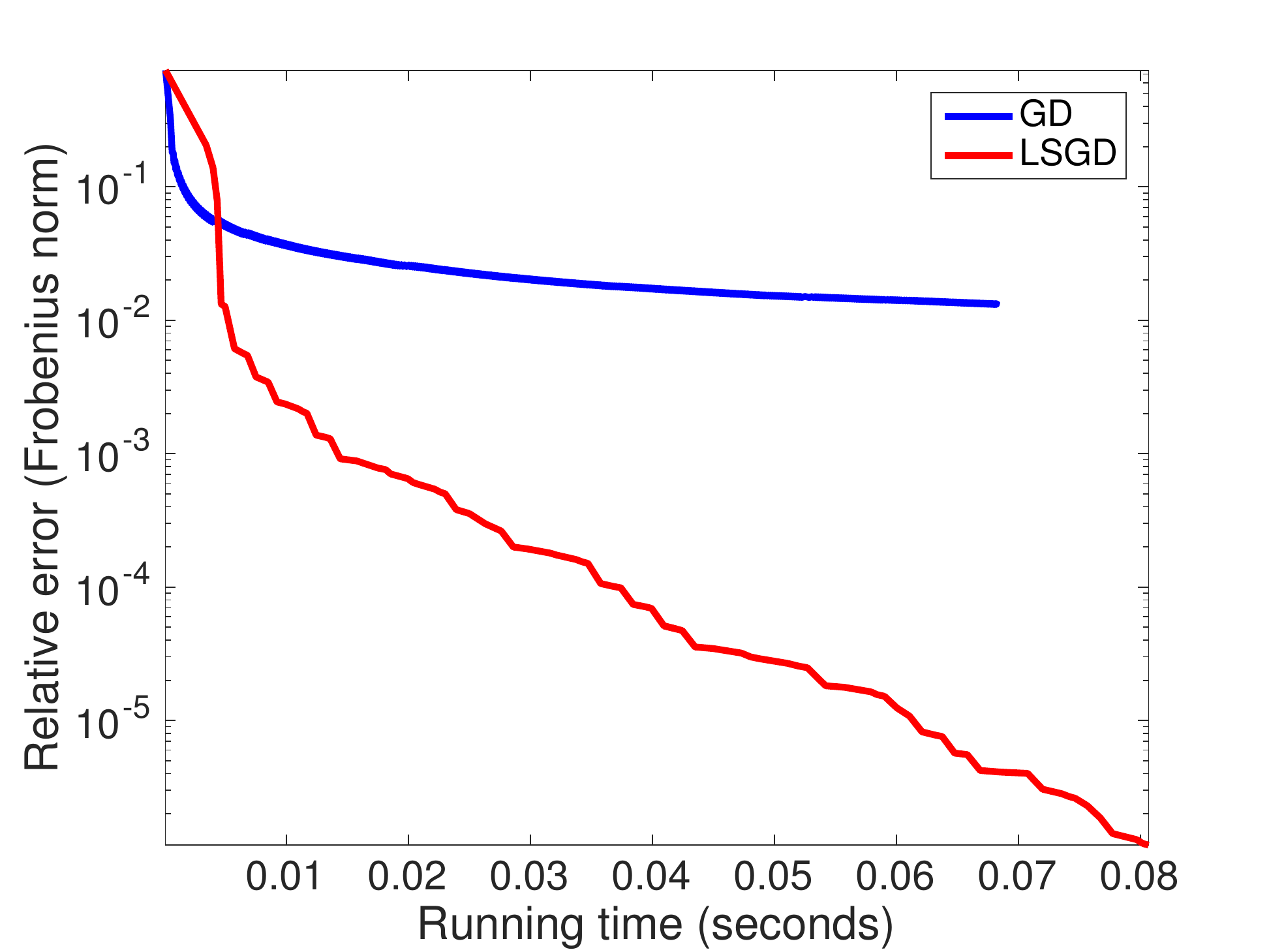}&
    \includegraphics[width=0.3\linewidth]{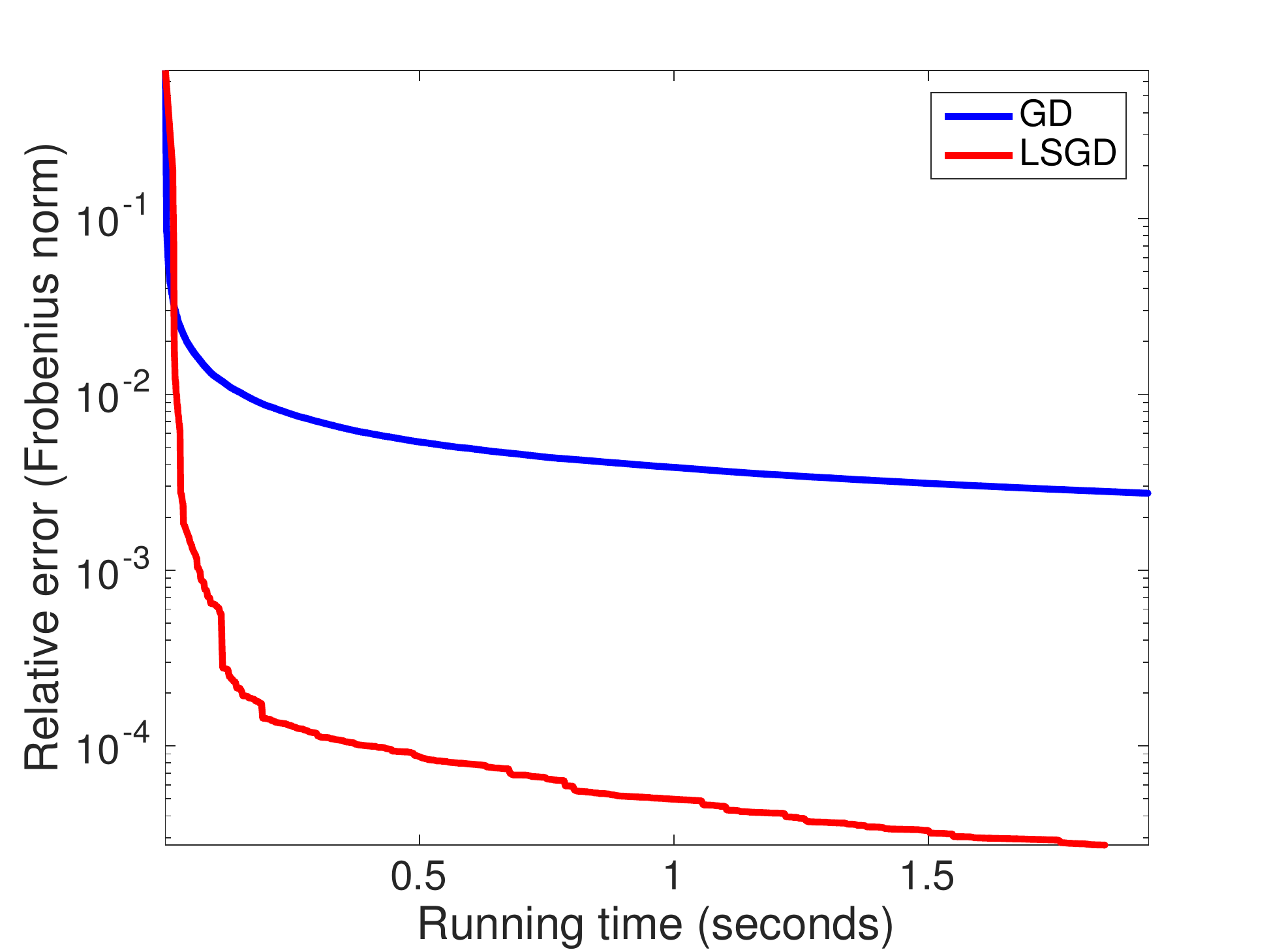}&
    \includegraphics[width=0.3\linewidth]{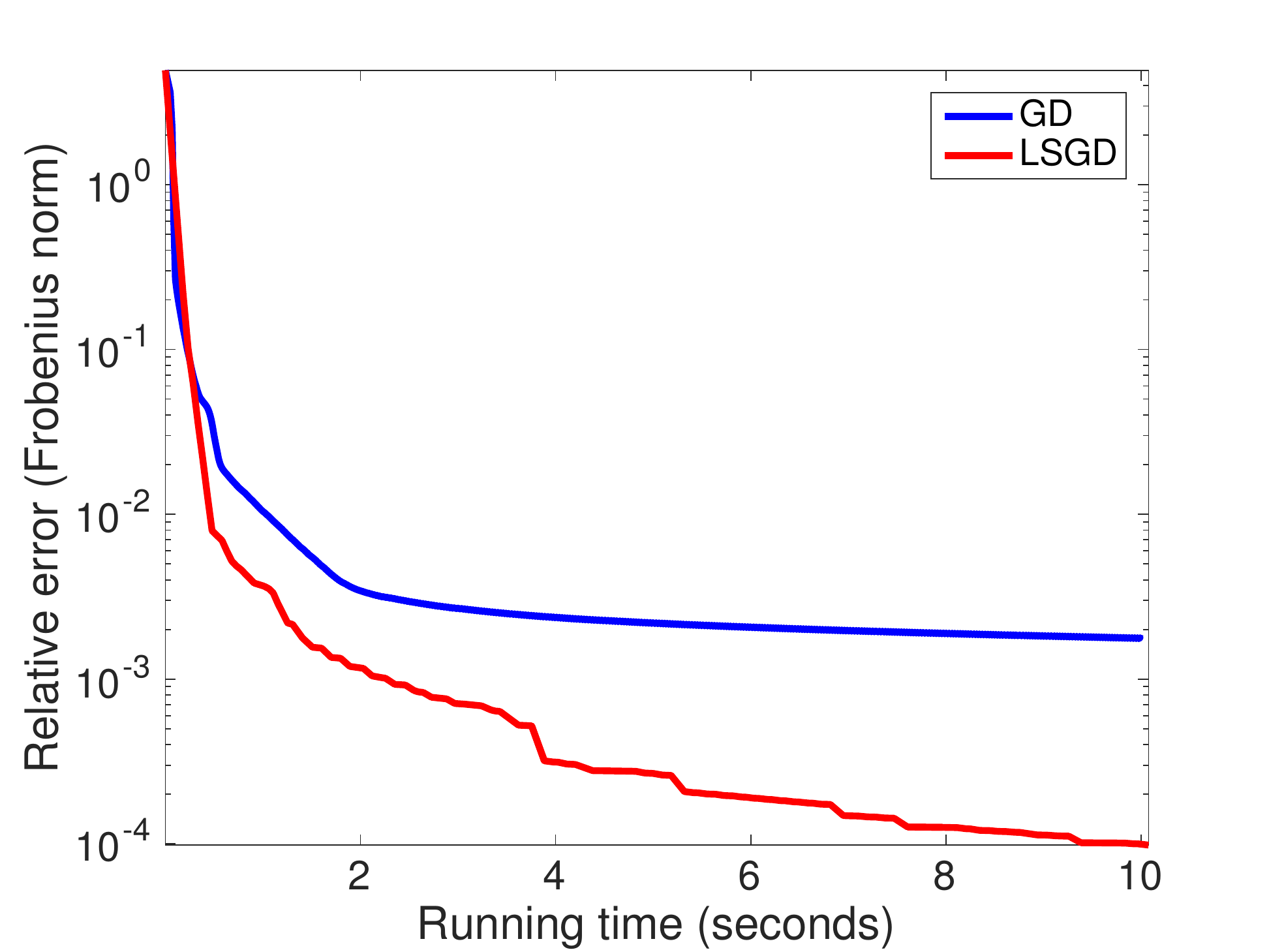}\\
    \footnotesize $50\times 50$ matrix $I+\beta UU^T$& \footnotesize $50\times 50$ Hilbert matrix & \footnotesize $500\times 500$ Covariance
  \end{tabular}
  \caption{Running time comparison between \textsc{Gd} and \textsc{LsGd}; this behavior is typical.}
  \label{fig:lsgd}
\end{figure}

\begin{figure}[h]
  \begin{tabular}{cccc}
    \hskip-12pt\includegraphics[width=0.25\linewidth]{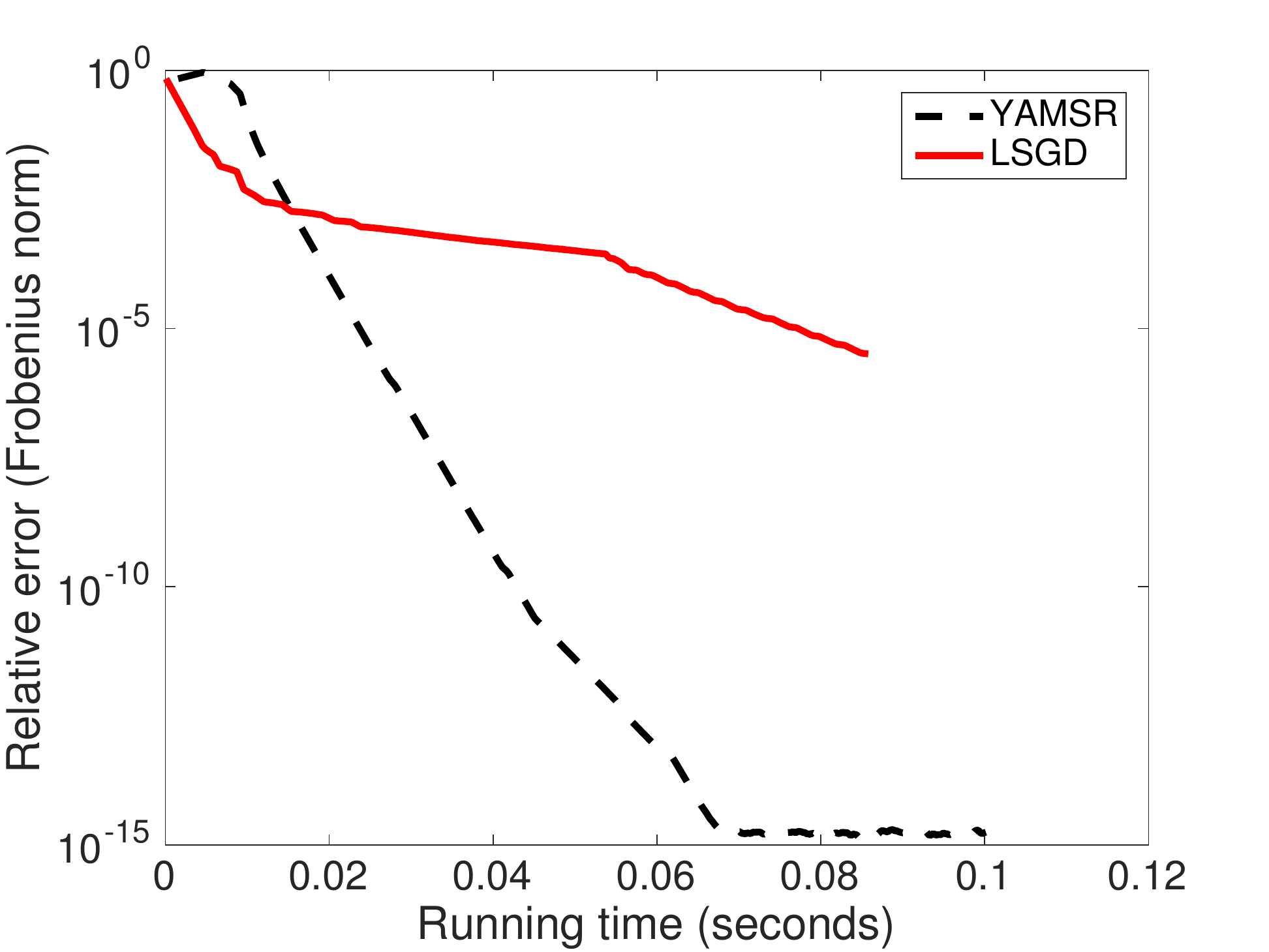}&
    \hskip-12pt\includegraphics[width=0.25\linewidth]{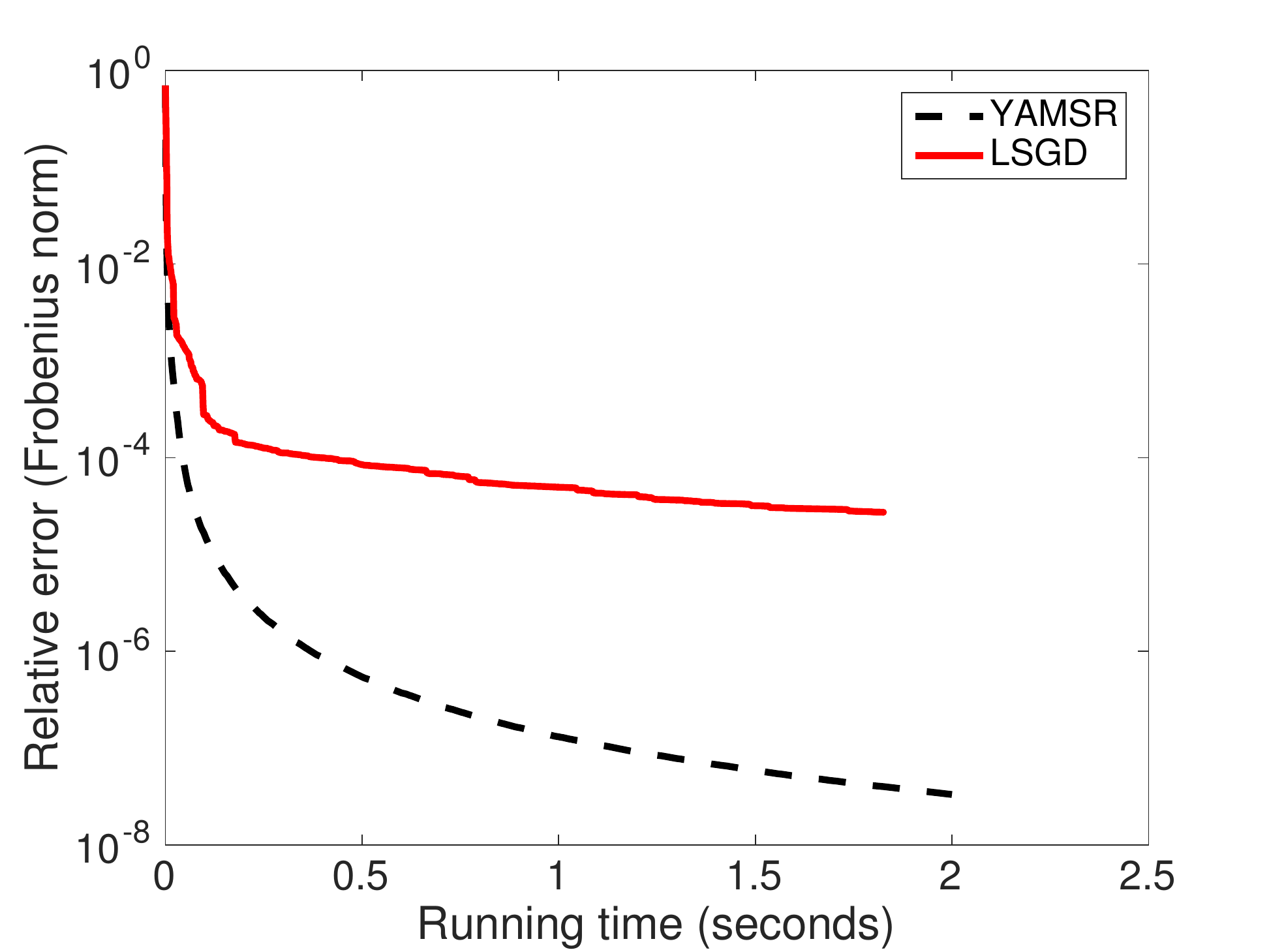}&
    \hskip-12pt\includegraphics[width=0.25\linewidth]{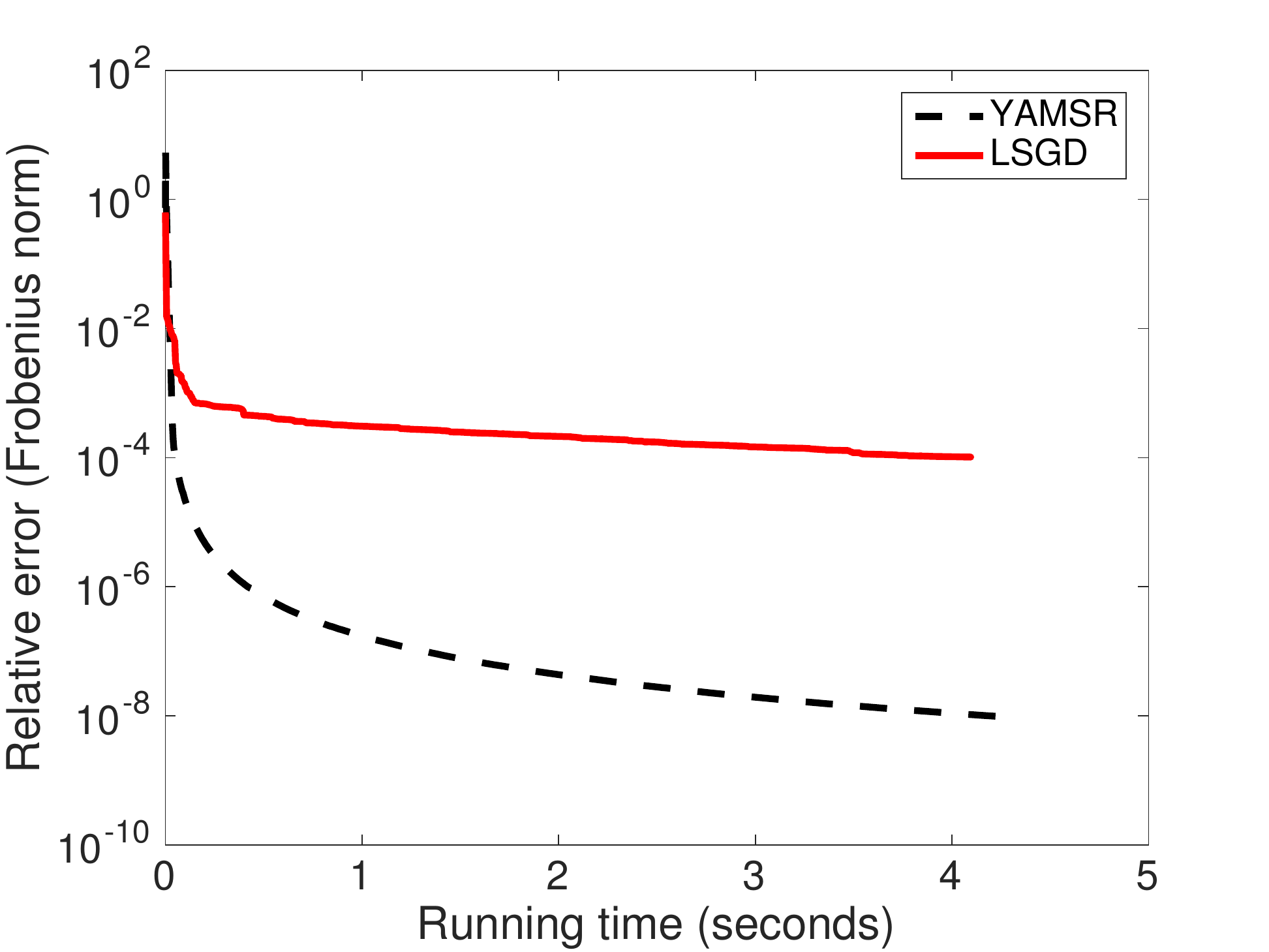}&
    \hskip-12pt\includegraphics[width=0.25\linewidth]{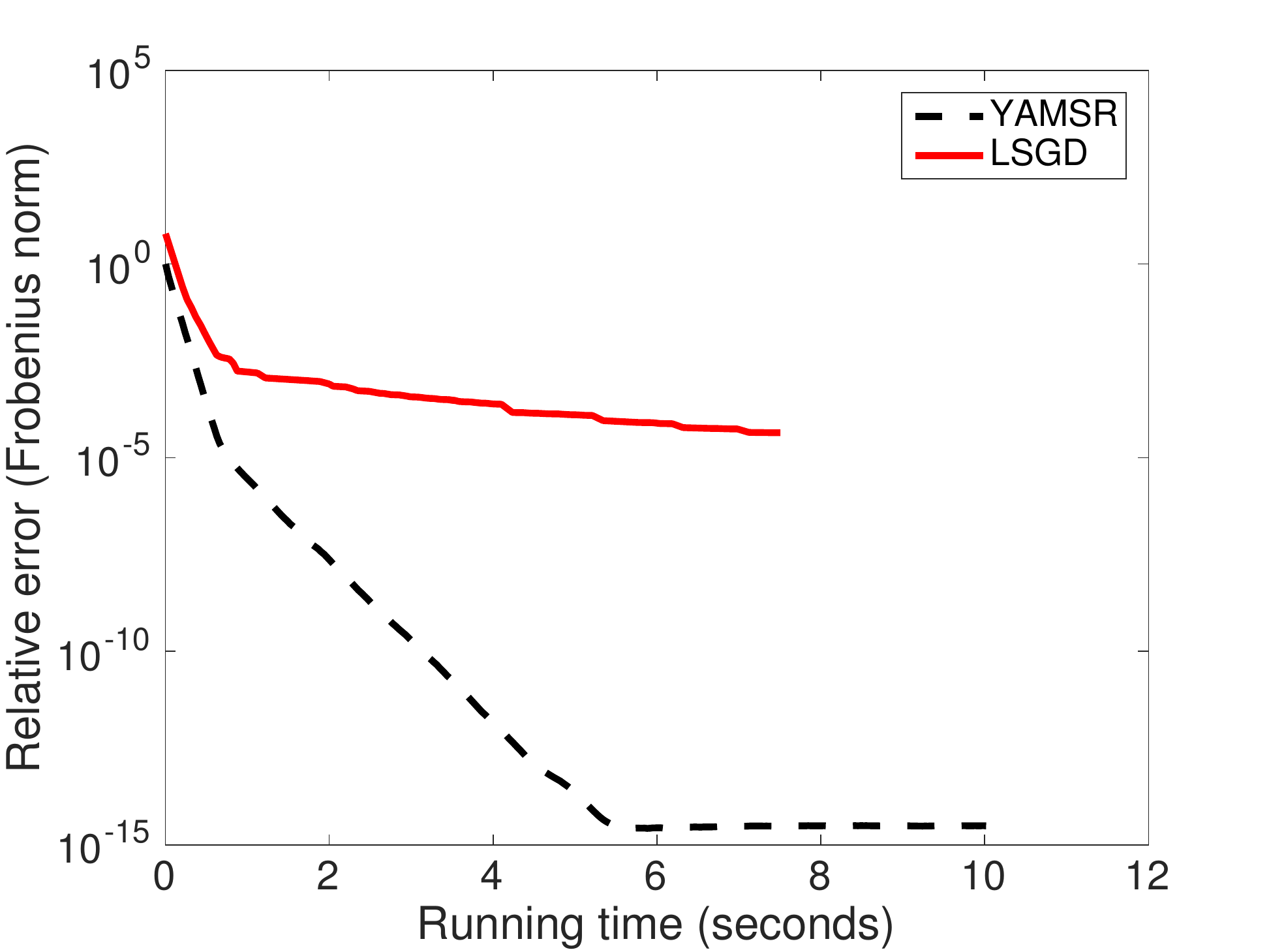}\\
    \footnotesize $50\times 50$ matrix $I+\beta UU^T$&
    \footnotesize $50\times 50$ Hilbert matrix&
    \footnotesize $100\times 100$ inverse Hilbert matrix&
    \footnotesize $500\times 500$ Correlation\\
    $\kappa \approx 64$ & $\kappa \approx 2.8\times 10^{18}$ & $\kappa \approx 9.1\times 10^{96}$ & $\kappa \approx 832$
  \end{tabular}
  \caption{Running time comparison: \algo vs \textsc{LsGd}; this behavior is typical ($\kappa$ is condition number).}
  \label{fig:algo}
\end{figure}

\begin{figure}[!h]
  \begin{tabular}{cccc}
   \hskip-12pt\includegraphics[width=0.25\linewidth]{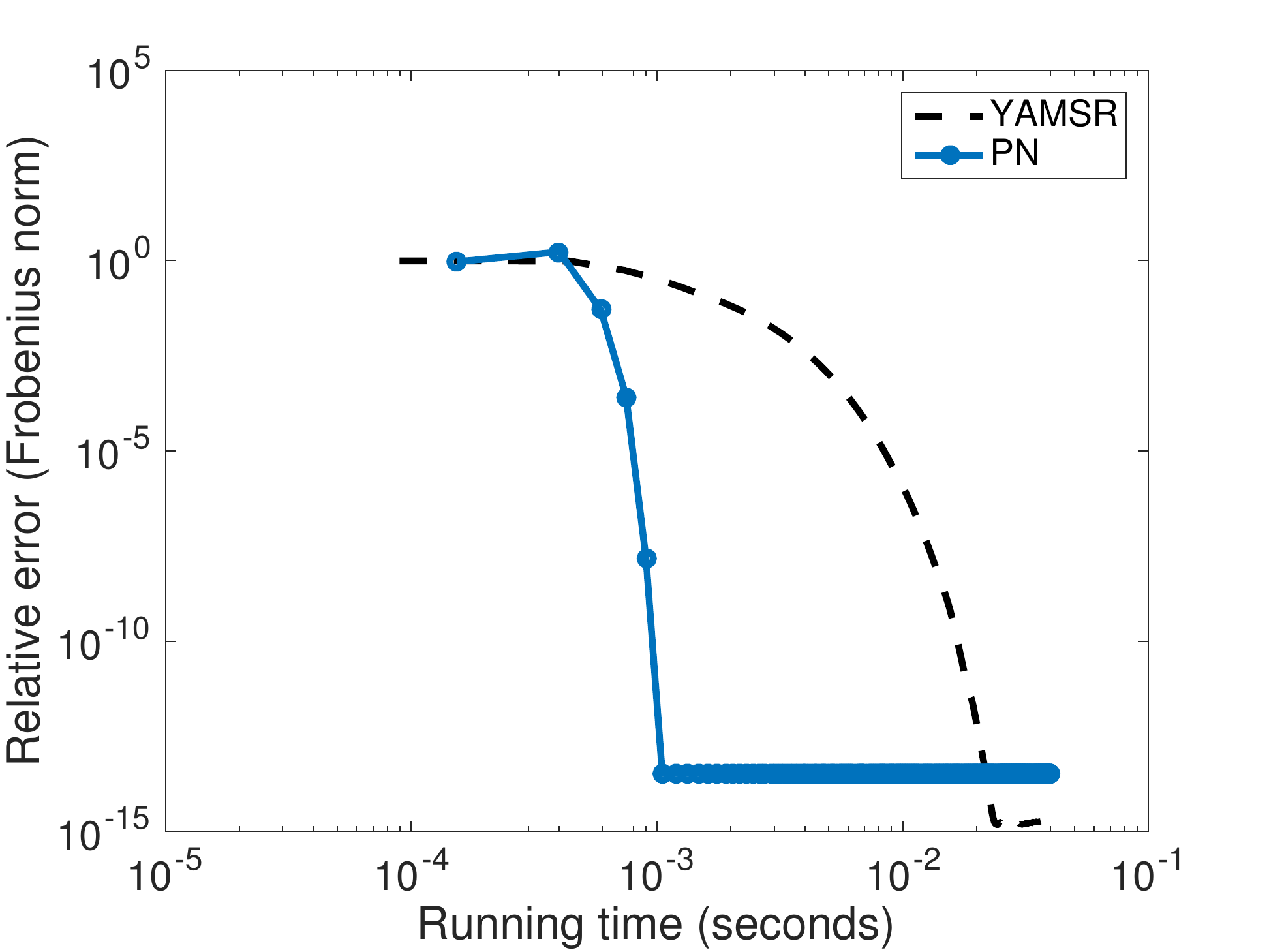}&
   \hskip-12pt\includegraphics[width=0.25\linewidth]{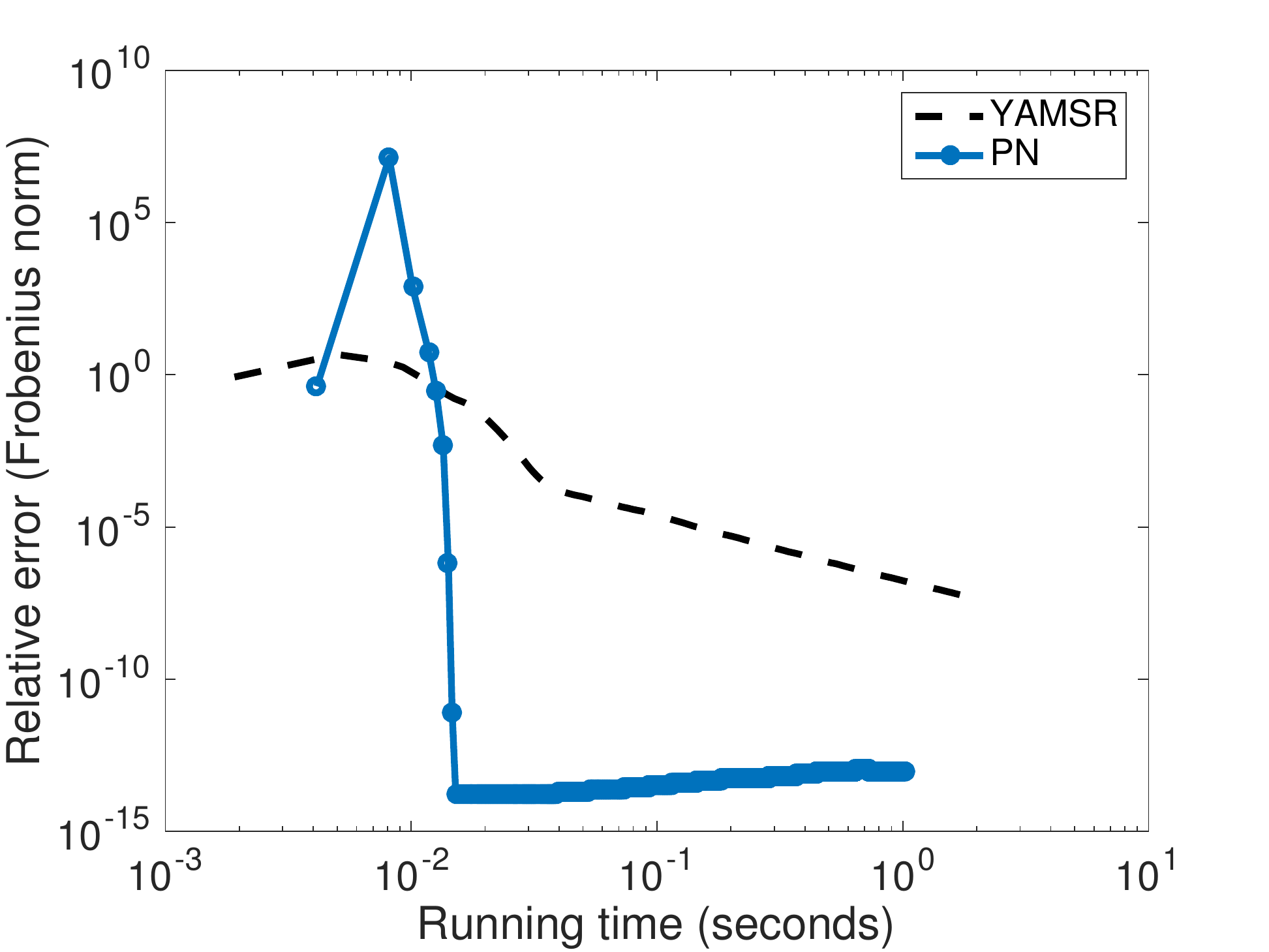}&
   \hskip-12pt\includegraphics[width=0.25\linewidth]{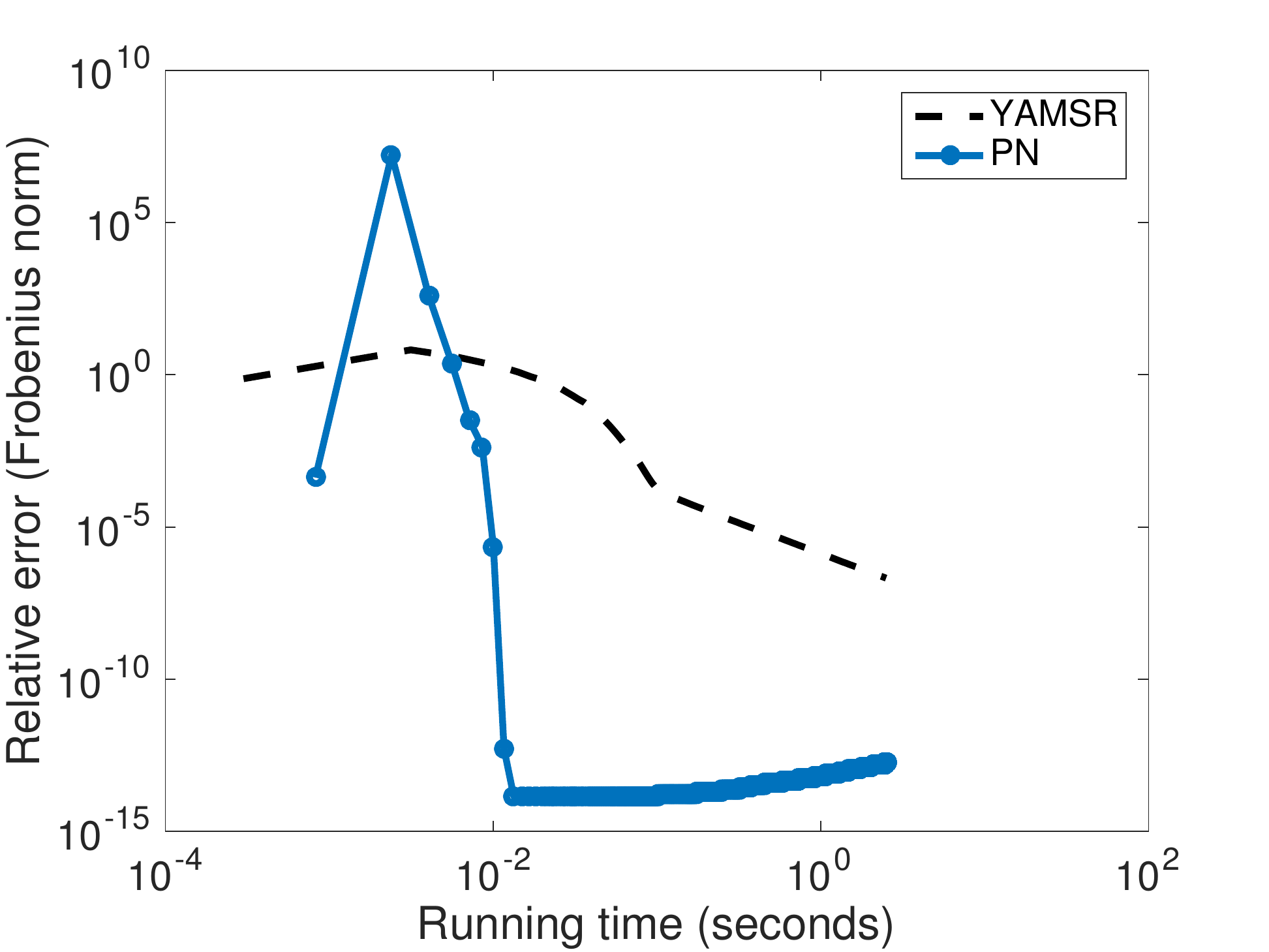}&
   \hskip-12pt\includegraphics[width=0.25\linewidth]{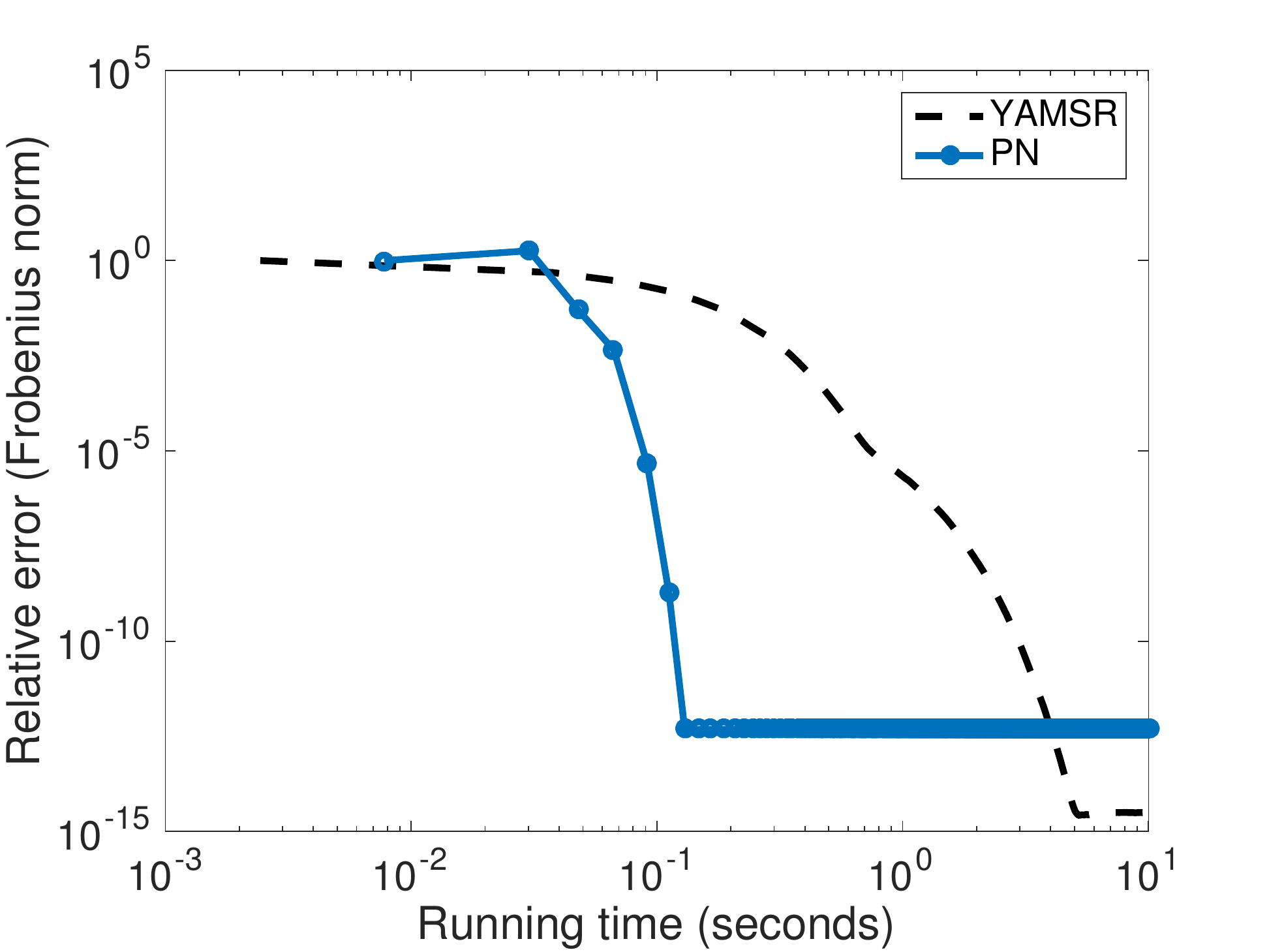}\\
    \footnotesize $50\times 50$ matrix $I+\beta UU^T$&
    \footnotesize $100\times 100$ Hilbert matrix&
    \footnotesize $150\times 150$ inverse Hilbert matrix&
    \footnotesize $500\times 500$ Correlation
  \end{tabular}
  \caption{Running time comparison between \algo and \textsc{Pn}; this behavior is also typical.}
  \label{fig:pn}
\end{figure}

\begin{figure}[h]
  \centering
  \includegraphics[scale=.17]{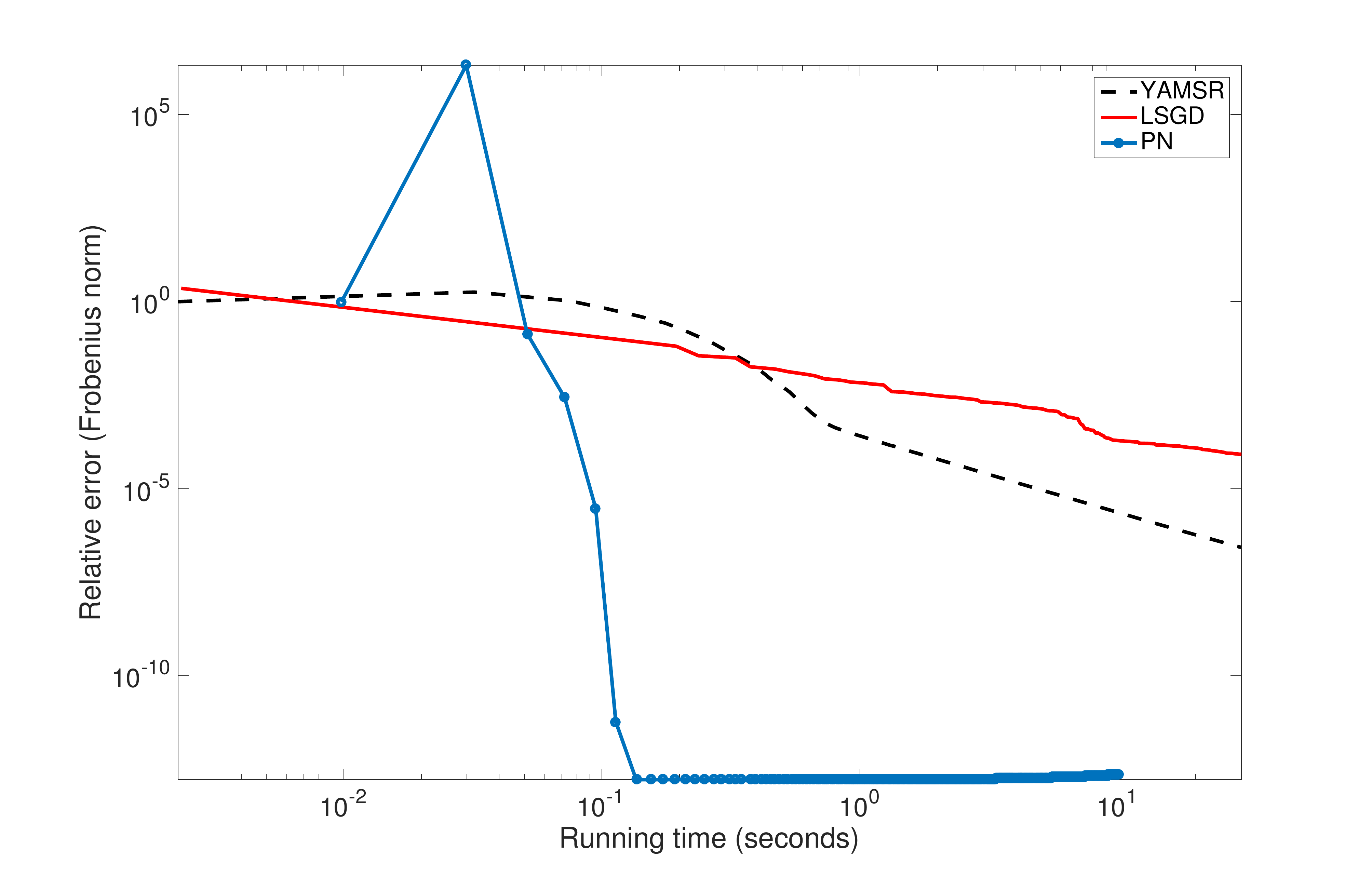}
  \caption{Square root computation of a $500\times 500$ low-rank (rank 50) covariance matrix.}
  \label{fig:lrank}
\end{figure}

\vspace*{-5pt}
\section{Conclusions}
\vspace*{-5pt}
We revisited computation of the matrix square root, and compared the recent gradient-descent procedure of~\citep{jain2015} against the standard polar-Newton method of~\citep[Alg.~6.21]{higham08}, as well as \algo, a new first-order fixed-point algorithm that we derived using the viewpoint of geometric optimization. The experimental results show that the polar-Newton method is the clear winner for computing matrix square roots (except near the $10^{-15}$ accuracy level for well-conditioned matrices). Among the first-order methods \algo outperforms both gradient-descent as well its superior  line-search variant across all tested settings, including singular matrices.

\vspace*{-5pt}
\bibliographystyle{abbrvnat}
{\small\setlength{\bibsep}{3pt}

\begin{thebibliography}{26}
\providecommand{\natexlab}[1]{#1}
\providecommand{\url}[1]{\texttt{#1}}
\expandafter\ifx\csname urlstyle\endcsname\relax
  \providecommand{\doi}[1]{doi: #1}\else
  \providecommand{\doi}{doi: \begingroup \urlstyle{rm}\Url}\fi

\bibitem[Anderson et~al.(1983)Anderson, Morley, and Trapp]{anMoTr83}
J.~Anderson, W.N., T.~Morley, and G.~Trapp.
\newblock Ladder networks, fixpoints, and the geometric mean.
\newblock \emph{Circuits, Systems and Signal Processing}, 2\penalty0
  (3):\penalty0 259--268, 1983.

\bibitem[Ando(1981)]{ando81}
T.~Ando.
\newblock Fixed points of certain maps on positive semidefinite operators.
\newblock In P.~Butzer, B.~Sz.-Nagy, and E.~Görlich, editors, \emph{Functional
  Analysis and Approximation}, volume~60, pages 29--38. Birkhäuser Basel,
  1981.

\bibitem[Ando et~al.(2004)Ando, Li, and Mathias]{andoLiMa04}
T.~Ando, C.-K. Li, and R.~Mathias.
\newblock Geometric means.
\newblock \emph{Linear Algebra and its Applications (LAA)}, 385:\penalty0
  305--334, 2004.

\bibitem[Bhatia(2007)]{bhatia07}
R.~Bhatia.
\newblock \emph{{Positive Definite Matrices}}.
\newblock Princeton University Press, 2007.

\bibitem[Bini and Iannazzo(2011)]{mmtoolbox}
D.~A. Bini and B.~Iannazzo.
\newblock Computing the {K}archer mean of symmetric positive definite matrices.
\newblock \emph{Lin. Alg. Appl.}, Oct. 2011.

\bibitem[Cheng et~al.(2015)Cheng, Cheng, Liu, Peng, and Teng]{chengCheng}
D.~Cheng, Y.~Cheng, Y.~Liu, R.~Peng, and S.-H. Teng.
\newblock {Efficient Sampling for Gaussian Graphical Models via Spectral
  Sparsification}.
\newblock In \emph{Conference on Learning Theory}, pages 364--390, 2015.

\bibitem[Cherian and Sra(2014)]{cherianSra14}
A.~Cherian and S.~Sra.
\newblock {Riemannian Dictionary Learning and Sparse Coding for Positive
  Definite Matrices}.
\newblock \emph{IEEE Transactions Pattern Analysis and Machine Intelligence},
  2014.
\newblock {\it Submitted}.

\bibitem[Hale et~al.(2008)Hale, Higham, and Trefethen]{hale08}
N.~Hale, N.~J. Higham, and L.~N. Trefethen.
\newblock Computing $a^\alpha$, $\log(a)$, and related matrix functions by
  contour integrals.
\newblock \emph{SIAM Journal on Numerical Analysis}, 46\penalty0 (5):\penalty0
  2505--2523, 2008.

\bibitem[Higham(2008)]{higham08}
N.~Higham.
\newblock \emph{{Functions of Matrices: Theory and Computation}}.
\newblock SIAM, 2008.

\bibitem[Higham(1986)]{high86}
N.~J. Higham.
\newblock Newton’s method for the matrix square root.
\newblock \emph{Mathematics of Computation}, 46\penalty0 (174):\penalty0
  537--549, 1986.

\bibitem[Hosseini and Sra(2015)]{hoSra15b}
R.~Hosseini and S.~Sra.
\newblock Matrix manifold optimization for {G}aussian mixture models.
\newblock In \emph{Advances in Neural Information Processing Systems (NIPS)},
  Dec. 2015.

\bibitem[Iannazzo(2011)]{ianna11}
B.~Iannazzo.
\newblock The geometric mean of two matrices from a computational viewpoint.
\newblock \emph{arXiv:1201.0101}, 2011.

\bibitem[Iannazzo and Meini(2011)]{ianMe}
B.~Iannazzo and B.~Meini.
\newblock Palindromic matrix polynomials, matrix functions and integral
  representations.
\newblock \emph{Linear Algebra and its Applications}, 434\penalty0
  (1):\penalty0 174--184, 2011.

\bibitem[Jain et~al.(2015)Jain, Jin, Kakade, and Netrapalli]{jain2015}
P.~Jain, C.~Jin, S.~M. Kakade, and P.~Netrapalli.
\newblock {Computing Matrix Squareroot via Non Convex Local Search}.
\newblock \emph{arXiv:1507.05854}, 2015.
\newblock URL \url{http://arxiv.org/abs/1507.05854}.

\bibitem[Jeuris et~al.(2012)Jeuris, Vandebril, and Vandereycken]{jeuVaVa}
B.~Jeuris, R.~Vandebril, and B.~Vandereycken.
\newblock A survey and comparison of contemporary algorithms for computing the
  matrix geometric mean.
\newblock \emph{Electronic Transactions on Numerical Analysis}, 39:\penalty0
  379--402, 2012.

\bibitem[Kubo and Ando(1980)]{kuboAndo80}
F.~Kubo and T.~Ando.
\newblock Means of positive linear operators.
\newblock \emph{Mathematische Annalen}, 246:\penalty0 205--224, 1980.

\bibitem[Lawson and Lim(2008)]{lawlim08}
J.~Lawson and Y.~Lim.
\newblock A general framework for extending means to higher orders.
\newblock \emph{Colloq. Math.}, 113:\penalty0 191--221, 2008.
\newblock (arXiv:math/0612293).

\bibitem[Lee and Lim(2008)]{leeLim}
H.~Lee and Y.~Lim.
\newblock Invariant metrics, contractions and nonlinear matrix equations.
\newblock \emph{Nonlinearity}, 21:\penalty0 857--878, 2008.

\bibitem[Nakatsukasa and Higham(2012)]{naHi12}
Y.~Nakatsukasa and N.~J. Higham.
\newblock Backward stability of iterations for computing the polar
  decomposition.
\newblock \emph{SIAM Journal on Matrix Analysis and Applications}, 33\penalty0
  (2):\penalty0 460--479, 2012.

\bibitem[Nakatsukasa et~al.(2010)Nakatsukasa, Bai, and Gygi]{yuji}
Y.~Nakatsukasa, Z.~Bai, and F.~Gygi.
\newblock {Optimizing Halley's Iteration for Computing the Matrix Polar
  Decomposition}.
\newblock \emph{SIAM Journal on Matrix Analysis and Applications}, 31\penalty0
  (5):\penalty0 2700--2720, 2010.

\bibitem[Nielsen and Bhatia(2013)]{nieBha13}
F.~Nielsen and R.~Bhatia, editors.
\newblock \emph{{Matrix Information Geometry}}.
\newblock Springer, 2013.

\bibitem[Sra(2015)]{ssdiv}
S.~Sra.
\newblock {Positive Definite Matrices and the S-Divergence}.
\newblock \emph{Proceedings of the American Mathematical Society}, 2015.
\newblock arXiv:1110.1773v4.

\bibitem[Sra and Hosseini(2015)]{sraHo15}
S.~Sra and R.~Hosseini.
\newblock {Conic Geometric Optimization on the Manifold of Positive Definite
  Matrices}.
\newblock \emph{SIAM J. Optimization (SIOPT)}, 25\penalty0 (1):\penalty0
  713--739, 2015.

\bibitem[Wiesel(2012{\natexlab{a}})]{wie12}
A.~Wiesel.
\newblock Geodesic convexity and covariance estimation.
\newblock \emph{{IEEE} Transactions on Signal Processing}, 60\penalty0
  (12):\penalty0 6182--89, 2012{\natexlab{a}}.

\bibitem[Wiesel(2012{\natexlab{b}})]{wie12b}
A.~Wiesel.
\newblock Unified framework to regularized covariance estimation in scaled
  {G}aussian models.
\newblock \emph{{IEEE} Transactions on Signal Processing}, 60\penalty0
  (1):\penalty0 29--38, 2012{\natexlab{b}}.

\bibitem[Zhang(2012)]{zhang12}
T.~Zhang.
\newblock Robust subspace recovery by geodesically convex optimization.
\newblock \emph{arXiv preprint arXiv:1206.1386}, 2012.

\end{thebibliography}

}

\appendix
\vspace*{-5pt}
\section{Technical details}
\vspace*{-5pt}
\begin{proof}[Proof of Theorem~\ref{thm:gc}] 
  It suffices to prove midpoint convexity; the general case follows by continuity. Consider therefore psd matrices $X_1, X_2, Y_1, Y_2$. We need to show that
  \begin{equation}
    \label{eq.sgc}
    \delta_S^2(X_1\gm_{1/2} X_2, Y_1\gm_{1/2} Y_2)
    \le \half\delta_S^2(X_1,Y_1) + \half\delta_S^2(X_2,Y_2).
  \end{equation}
  From the joint concavity of the operator $\gm_{1/2}$~\citep{kuboAndo80} we know that
  \begin{equation*}
    X_1 \gm_{1/2} X_1 + Y_1 \gm_{1/2} Y_2 \preceq (X_1+Y_1) \gm_{1/2} (X_2+Y_2).
  \end{equation*}
  Since $\log\det$ is a monotonic function, applying it to this inequality yields
  \begin{equation}
    \label{eq:9}
    \log\det[X_1 \gm_{1/2}  X_1 + Y_1 \gm_{1/2} Y_2]
    \le
    \log\det[(X_1+Y_1) \gm_{1/2} (X_2+Y_2)].
  \end{equation}
  But we also know that $\log\det((1-t)X\gm_{t} t Y)=(1-t)\log\det(X)+t\log\det(Y)$. Thus, a brief algebraic manipulation of~\eqref{eq:9} combined with the definition~(\ref{eq:1}) yields inequality~\eqref{eq.sgc} as desired. 
\end{proof}

\end{document}